\newtheorem{theorem}{Theorem}
\newtheorem{lemma}{Lemma}
\numberwithin{equation}{section}
\title{ A conditional approach for monochromatic unit
distance in a plane for four and five coloring }
\author{Saayan Mukherjee} 
\address{Saayan Mukherjee,
Ramakrishna Mission Vivekananda Educational and Research Institute, Department of Mathematics, Belur Math, Howrah, West Bengal 711202, India}
\email{saayanwith@gmail.com}
\subjclass[2022]{05C15, 05C12, 05C10} 
\keywords{Edward Nelson problem, Moser Spindle, unit-distance graph, chromatic number, Lebesgue measure}
\begin{document}
\begin{abstract}
A measure theoretic approach of the problem that there exits a finite unit-distance graphs in the plane that are not five (or four) colorable.
\end{abstract}
\maketitle
\section{Introduction}
What is the minimum number of colors that are
required to color the points of the plane so that no two points of unit distance apart are assigned the same color? This number is called chromatic number of the plane, and it is usually denoted by $\chi(\mathbb{E}^2)$. This interesting question was first raised by Edward Nelson in 1950.\\
Leo and William Moser\cite{E} and later, Golomb \cite{C} constructed unit-distance graphs which proved that $\chi(\mathbb{E}^2)\geq 4 $ in figure 2.Figure 2 is called Moser Spindle which is the 7-node unit-distance graph.Hadwiger showed that $\chi(\mathbb{E}^2)\leq 7$ \cite{D} in figure 1.After a long time, Aubrey de Grey \cite{B} and later on Geoffrey Exoo and Dan Ismailescu \cite{A} constructed a unit distance graph $S_5$ thus improving the lower bound by 1 that is $\chi(\mathbb{E}^2)\geq 5$.

\begin{figure}[h]
\centering
    \begin{subfigure}{0.5\textwidth}
    \centering
    \includegraphics[width=0.8888\linewidth, height=6cm]{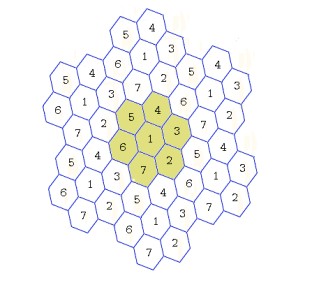}
   \caption{Figure 1. A 7-colouring of a tessellation of the plane by regular hexagons, with diameter slightly less than one. }
    \label{fig:my_label}
    \end{subfigure}%
  \begin{subfigure}{0.5\textwidth}
  \centering
      \includegraphics[width=0.8888\linewidth, height=6cm]{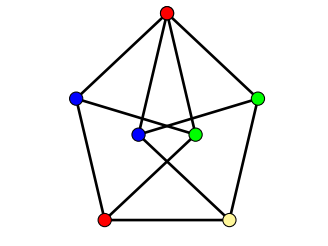}
    \caption{Figure 2. The Moser spindle}
    \label{fig:my_label}
    \end{subfigure}
\end{figure}

The idea of Geoffrey Exoo  and Dan Ismailescu is quite interesting.First they proved the following assertions.\\
(1)For any proper four coloring of $\mathbb{R}^2$ there exist a monochromatic pair of points distance $\sqrt{11/3}$ apart.\\
(2)If assertion 1 holds, then there exists a monochromatic equilateral triangle with side length $1/\sqrt{3}$.\\
(3)Let A, B and C be the vertices of an equilateral triangle of side $1/\sqrt{3}$. Then, there exists a unit distance graph containing A, B and C among its vertices, which cannot be $4$-colored under the restriction that A, B and C are identically colored.\\
Using these assertions they constructed a family of finite unit-distance graphs in the plane that are not four colorable.\\
In the rest of this paper, I will discuss this problem under an additional measure theoretic condition.This allows us to establish the existence of a monochromatic unit distance in a five (or four) coloring, provided this additional condition is satisfied.  

\section{Main Result}
\begin{theorem}

Suppose we have a five (or four) coloring of $\mathbb{R}^2$ where one of the five (or four) monochromatic sets is a set of Lebesgue measure $0$.Then there is a monochromatic unit distance in  $\mathbb{R}^2$.
\end{theorem}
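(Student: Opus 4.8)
\emph{Plan of proof.} The plan is to bypass all measure-theoretic machinery about unit-distance–avoiding sets and instead reduce the statement to the \emph{finite} $5$-chromatic unit-distance graph already known to exist, using the hypothesis only through the soft fact that a set of Lebesgue measure $0$ cannot meet every congruent copy of a fixed finite point configuration. Write the colour classes as $C_1,\dots,C_5$ (respectively $C_1,\dots,C_4$) and, after relabelling, let $C_5$ (respectively $C_4$) be the class of measure $0$; then $C_1\cup\dots\cup C_4$ (respectively $C_1\cup C_2\cup C_3$) is $\mathbb{R}^2$ with a null set removed. Note that I do not — and need not — assume the remaining classes are Lebesgue measurable.

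First I would invoke the theorem of de Grey~\cite{B}, refined by Exoo--Ismailescu~\cite{A}: there is a finite unit-distance graph $S_5$ in the plane, with vertex set $V=\{v_1,\dots,v_N\}\subset\mathbb{R}^2$ and edges exactly the pairs of vertices at distance $1$, such that $\chi(S_5)=5$. Next I would parametrise the congruent copies of $S_5$ by the group $G$ of orientation-preserving rigid motions, identified with $[0,2\pi)\times\mathbb{R}^2$ via $(\theta,a)\mapsto(x\mapsto R_\theta x+a)$, a $3$-dimensional space carrying Lebesgue measure. For each vertex $v_j$ set
\[
E_j=\{(\theta,a)\in G:\ R_\theta v_j+a\in C_5\},\qquad j=1,\dots,N.
\]
For fixed $\theta$ the $a$-slice of $E_j$ is the translate $C_5-R_\theta v_j$, of planar measure $0$; hence $E_j$ is null in $G$ by Fubini, and so is $E:=E_1\cup\dots\cup E_N$. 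Therefore $G\setminus E\neq\varnothing$, so I may pick $(\theta_0,a_0)\notin E$ and set $g(x)=R_{\theta_0}x+a_0$.

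By the choice of $g$ we have $g(v_j)\notin C_5$ for all $j$, so $g(V)\subseteq C_1\cup\dots\cup C_4$ (respectively $C_1\cup C_2\cup C_3$). Since $g$ is an isometry, $g(S_5)$ is again a unit-distance graph isomorphic to $S_5$, hence not $4$-colourable, a fortiori not $3$-colourable; but the given colouring restricts on $g(V)$ to a colouring using at most $4$ (respectively $3$) colours, so some edge $g(v_i)g(v_j)$ of $g(S_5)$ must be monochromatic. An edge of a unit-distance graph joins two points at distance exactly $1$, and that is the desired monochromatic unit distance in $\mathbb{R}^2$. (In the four-colour case one could equally replace $S_5$ by the Moser spindle, whose chromatic number $4$ already beats the $3$ surviving colours.)

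The content of the argument is outsourced entirely to the existence of the $5$-chromatic graph $S_5$, so the proof itself is short; the step I expect to need the most care — the only genuine subtlety — is the measurability bookkeeping in the Fubini paragraph: because the classes $C_1,\dots,C_4$ are not assumed measurable, the slicing argument may be applied only to the single null set $C_5$ (and, strictly, one should first enlarge $C_5$ to a Borel null set, which affects nothing else). Beyond that I do not anticipate any obstacle.
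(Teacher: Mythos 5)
Your proposal is correct and is essentially the paper's own argument: place a copy of the $5$-chromatic graph $S_5$ (or the Moser spindle in the four-colour case) so that all its vertices avoid the null colour class, which forces a monochromatic edge, i.e.\ a monochromatic unit distance. The only difference is cosmetic: the paper achieves the avoidance with a pure translation chosen from a small square (the bad set of translations is a finite union of translates of the null set, hence null), whereas you parametrise by the full rigid-motion group and invoke Fubini — slightly heavier machinery for the same conclusion.
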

To prove this we need to prove the following lemma.

\begin{lemma}
Let A be a subset of $\mathbb{R}^2$ of measure $0$.Let $\{p_1,p_2,...,p_n\}$ be any distinct points in $\mathbb{R}^2$.Then for any $\epsilon>0$ there is $\lambda\in[0,\epsilon]\times[0,\epsilon]$ such that $p_1,p_2,....,p_n \in \mathbb{R}^2 \setminus A$.In particular,there is $\lambda \in \mathbb{R}^2$ such that  $ \{p_1,p_2,...,p_n\} \in \mathbb{R}^2 \setminus A$.
\end{lemma}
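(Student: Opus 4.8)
The plan is to reduce the existence of a suitable translation vector $\lambda$ to a comparison of measures. Writing $m$ for Lebesgue measure on $\mathbb{R}^2$, note first that the statement should be read as: there is $\lambda\in[0,\epsilon]\times[0,\epsilon]$ with $p_1+\lambda,\dots,p_n+\lambda\in\mathbb{R}^2\setminus A$, i.e. the \emph{translated} configuration avoids $A$. For each index $i\in\{1,\dots,n\}$ define the forbidden set
\[
F_i \;=\; (A-p_i)\cap\bigl([0,\epsilon]\times[0,\epsilon]\bigr)
\;=\;\{\lambda\in[0,\epsilon]^2 : p_i+\lambda\in A\}.
\]
Since Lebesgue measure is translation invariant and $m(A)=0$, each $A-p_i$ is a null set, hence $m(F_i)=0$; and since a finite union of null sets is null, the set $F:=\bigcup_{i=1}^n F_i$ satisfies $m(F)=0$.

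Next I would compare this with the measure of the square: $m\bigl([0,\epsilon]\times[0,\epsilon]\bigr)=\epsilon^2>0$. Because $m(F)=0<\epsilon^2$, the difference $\bigl([0,\epsilon]\times[0,\epsilon]\bigr)\setminus F$ is nonempty (indeed it has full measure $\epsilon^2$), so we may choose any $\lambda$ in it. For every $i$ we then have $\lambda\notin F_i$, which by the definition of $F_i$ means precisely $p_i+\lambda\notin A$, i.e. $p_i+\lambda\in\mathbb{R}^2\setminus A$. This proves the first claim. The ``in particular'' clause is immediate: take $\epsilon=1$ (or any fixed positive value) and observe $[0,1]^2\subseteq\mathbb{R}^2$, so the $\lambda$ just produced lies in $\mathbb{R}^2$ and translates the whole configuration off $A$.

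There is no real obstacle here; the proof is a one-line measure estimate once the forbidden sets are set up correctly. The only points deserving explicit mention are the translation invariance of $m$ (used to see each $A-p_i$ is null) and countable — here merely finite — subadditivity (used to see $F$ is null, hence cannot cover the positive-measure square). It may also be worth remarking that distinctness of the $p_i$ is not actually needed for this argument; it is recorded only because the lemma will later be applied to the vertex set of a finite unit-distance graph, where the vertices are genuinely distinct, in order to shift such a graph entirely off the measure-zero colour class in the proof of Theorem~1.
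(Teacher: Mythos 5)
Your proof is correct and follows essentially the same route as the paper: both decompose the set of bad translations as $\bigcup_{i}\bigl((A-p_i)\cap[0,\epsilon]^2\bigr)$, invoke translation invariance to see each piece is null, and conclude by finite subadditivity that the null union cannot exhaust the square of measure $\epsilon^2$. The only difference is presentational — you argue directly while the paper phrases it as a contradiction — and your remark that distinctness of the $p_i$ is not needed is accurate.
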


\begin{proof}
Now we want to show that for any $\epsilon(>0)$, there exists $\lambda \in [0,\epsilon]\times[0,\epsilon]$ such that shifts of the points $ \{p_1,p_2,...,p_n\}$ by $\lambda$\\ satisfy $$\{p_1+\lambda,p_2+\lambda,....,p_n+\lambda\} \in R^2\setminus A.$$
If this is not the case,
then consider the set $S$ as follows,\\
$$S = \{\lambda\in[0,\epsilon]\times[0,\epsilon] : p_1+\lambda \in A~~~ \text{or}~~~ p_2+\lambda\in A~~~  \text{or} ....~~\text{or} ~~p_n+\lambda \in A\} = [0,\epsilon]\times[0,\epsilon].$$
So, the Lebesgue measure of the set $S$ is $\epsilon^2>0$.
On the other hand the set $S$ can be written as 
\begin{align*}
    S=&\cup_{j=1}^{n} \{\lambda\in[0,\epsilon]\times[0,\epsilon]: p_j+\lambda\in A\} \\
    =&\cup_{j=1}^{n} ((A-p_j) \cap ([0,\epsilon]\times[0,\epsilon]),where~ A-x=\{a-x:a\in A\}~for~x\in \mathbb{R}^2.
\end{align*}
But as $\mu(A)=0$,we have $\mu(A-p_j)=0.$
So, $\mu((A-p_j) \cap ([0,\epsilon]\times[0,\epsilon]))=0$.
Hence, $\mu(S)=0$, which is a contradiction.
\\
We note that the above lemma holds also when we replace the finite set $ \{p_1,p_2,...,p_n\}$ of points by a countable subset of $\mathbb{R}^2$, with essentially the same proof, but we need it only in its above version for finitely many points.
\end{proof}

\section{Proof of the Theorem} 
As we know that there exist a unit-distance graphs $S_5$ in the plane
that are not 4-colorable which is constructed at the url \cite{B}.Let $\{p_1,p_2,....,p_k\}$ is the vertices of the graph $S_5$. Let A be a set of Lebesgue measure 0, representing a monochromatic set.Theorem 1 follows if there exists $\lambda \in \mathbb{R}^2$ such that \{$p_1+\lambda,p_2+\lambda,....,p_{k}+\lambda\} \in \mathbb{R}^2 \setminus A$ i.e, for a suitable $\lambda \in \mathbb{R}^2,$ the vertices of the graph, shifted by $\lambda,$ lie all in $\mathbb{R}^2 \setminus A.$This is the case due to Lemma 1.
\\
Hence, the theorem for five coloring.
\\
In similar way, we can prove for four coloring also.
\section{conclusion}
Lemma 1 seems surprising on the first look since A can be even in dense in $\mathbb{R}^2$(take for example, $\mathbb{Q}^2$).Neverthless, its proof tells us that a suitable shift $\lambda$ always exist for which all of $\{p_1+\lambda,p_2+\lambda,....,p_n+\lambda\}$ lies in $\mathbb{R}^2 \setminus A$.However, the proof is non-constructive, i.e it doesn't yield a concrete $\lambda$.

\section{Acknowledgements}
 I would like to thank Prof. Stephan Baier (from RKMVERI) for helping me throught out the paper and also thanks to Prof. Malabika Pramanik (from University of British Columbia) for useful discussion.
 \\
I would also like to thank Prof. S.D. Adhikari (from RKMVERI) for his excellent course on combinatorics and his guidance to me.

\end{document}